\newtheorem{theorem}{Theorem}[section]
\newtheorem{lemma}[theorem]{Lemma}
\newtheorem{proposition}[theorem]{Proposition}
\newtheorem{corollary}[theorem]{Corollary}
\title{ On words that are concise in residually finite groups}
\author{Cristina Acciarri}
\address{Cristina Acciarri:  Department of Mathematics, University of Brasilia,
Brasilia-DF, 70910-900 Brazil}
\email{acciarricristina@yahoo.it}
\author{Pavel Shumyatsky} 
\address{Pavel Shumyatsky: Department of Mathematics, University of Brasilia,
Brasilia-DF, 70910-900 Brazil}
\email{pavel@unb.br}
\keywords{Residually finite groups, multilinear commutators, concise words}
\subjclass[2010]{Primary  20F10;  Secondary  20E26}
\thanks{This research was supported by CNPq-Brazil.}
\begin{document}
\begin{abstract} A group-word $w$ is called concise if whenever the set of $w$-values in a group $G$ is finite it always follows that the verbal subgroup $w(G)$ is finite. More generally, a word $w$ is said to be concise in a class of groups $X$ if whenever the set of $w$-values  is finite for a group $G\in X$, it always follows that $w(G)$ is finite. P. Hall asked whether every word is concise. Due to Ivanov the answer to this problem is known to be negative. Dan Segal asked whether every word is concise in the class of residually finite groups. In this direction we prove that if $w$ is a multilinear commutator and $q$ is a prime-power, then the word $w^q$ is indeed concise in the class of residually finite groups. Further, we show that in the case where $w=\gamma_{k}$ the word $w^{q}$ is boundedly concise in the class of residually finite groups. It remains unknown whether the word $w^q$ is actually concise in the class of all groups.

\end{abstract}

\maketitle

\section{Introduction}
Let $w$ be a group-word in $n$ variables, and let $G$ be a group. The verbal subgroup $w(G)$ of $G$ determined by $w$ is the subgroup generated by the set $G_w$ consisting of all values $w(g_1,\ldots,g_n)$, where $g_1,\ldots,g_n$ are elements of $G$.  A word $w$ is said to be concise if whenever $G_w$ is finite for a group $G$, it always follows that $w(G)$ is finite. More generally, a word $w$ is said to be concise in a class of groups $X$ if whenever $G_w$ is finite for a group $G\in X$, it always follows that $w(G)$ is finite. P. Hall asked whether every word is concise, but  later Ivanov proved that this problem has a negative solution in its general form \cite{ivanov} (see also \cite[p.\ 439]{ols}). On the other hand, many relevant words are known to be concise. For instance, it was shown in \cite{jwilson} that the multilinear commutator words are concise. Such words are also known under the name of outer commutator words and are precisely the words that can be written in the form of multilinear Lie monomials. Merzlyakov showed that every word is concise in the class of linear groups \cite{merzlyakov} while Turner-Smith proved that every word is concise in the class of  residually finite groups all of whose quotients are again residually finite \cite{TS2}. There is an open problem, due to Dan Segal \cite[p.\ 15]{Se}, whether every word is concise in the class of residually finite groups.

In the present paper we obtain the following result.

\begin{theorem}
\label{T1}
Let $w$ be a multilinear commutator word and  $q$ a prime-power.
The word $w^q$ is concise in the class of residually finite groups.
\end{theorem}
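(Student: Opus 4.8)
The plan is as follows. Write $m=|G_{w^q}|$ for the number of $w^q$-values of $G$. Because $w$ is a multilinear commutator word, the set $G_{w^q}$ is invariant under conjugation and under inversion, so $H:=(w^q)(G)=\langle G_{w^q}\rangle$ is generated by at most $m$ elements, and each element of $G_{w^q}$ has at most $m$ conjugates in $G$, whence $C_G(H)$ has finite index in $G$. Since $H$ is generated by finitely many $q$-th powers of $w$-values, collecting generating tuples for those $w$-values produces a finitely generated subgroup $G_0\le G$ with $(w^q)(G_0)=H$; as $G_0$ is residually finite and $(G_0)_{w^q}$ is finite, we may assume from the outset that $G$ is finitely generated. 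Then $H$ is finitely generated with $C_G(H)$ of finite index, so $Z(H)=H\cap C_G(H)$ has finite index in $H$ and Schur's theorem gives that $H'$ is finite; as a quotient of a residually finite group by a finite normal subgroup is again residually finite, we may replace $G$ by $G/H'$ and assume that $H$ is abelian. These reductions are meant to bring the problem within reach of the tools surrounding the restricted Burnside problem.

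The next step is to exploit the multilinearity of $w$. Put $D=[w(G),G]$. Then $w(G)/D$ is abelian and central in $G/D$, and $w$ is linear in each variable modulo $D$, i.e. $w(\ldots,ab,\ldots)\equiv w(\ldots,a,\ldots)\,w(\ldots,b,\ldots)\pmod D$ when the substitution is made in a single variable. Hence, fixing all but one argument of a $w$-value $x$ and replacing the remaining argument by its $j$-th power, we obtain $w$-values $x_j$ with $x_j\equiv x^{\,j}\pmod D$, and therefore $x_j^{\,q}\equiv x^{\,jq}\pmod D$. As $j$ ranges over the integers the elements $x_j^{\,q}$ lie in the finite set $G_{w^q}$, so among $x_0^{\,q},\ldots,x_m^{\,q}$ two coincide modulo $D$, and we conclude that $x^{\,M}\in D$ with $M=m!\,q$, uniformly over all $w$-values $x$. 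Thus $w(G)/[w(G),G]$ has finite exponent dividing $M$; a straightforward induction along $w(G)\ge[w(G),G]\ge[w(G),{}_{2}G]\ge\cdots$ then shows that each factor $[w(G),{}_{i}G]/[w(G),{}_{i+1}G]$ has exponent dividing $M$, so in particular $H/(H\cap[w(G),{}_{i}G])$ is finite for every $i$.

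What remains — and where I expect the real difficulty to lie — is to turn this bounded-exponent information into the finiteness of $H$. One cannot simply pass to $G/H$, since that quotient need not be residually finite; the argument must be carried out inside $G$ and its finite quotients. Using that $G$ is finitely generated and residually finite, so that the finite-index normal subgroups of $G$ intersect trivially, the strategy is to reduce the assertion to a statement about finite quotients of $G$ — roughly, that the uniform bounds $m$ on $|G_{w^q}|$ and $M$ on the order of $w$-values modulo $[w(G),G]$ force the subgroup generated by $w^q$-values in any finite quotient to be small. This is exactly the type of assertion that becomes tractable through Zelmanov's solution of the restricted Burnside problem: via the associated Lie ring one obtains a Lie algebra over the field with $p$ elements (where $q$ is a power of the prime $p$) that is generated by ad-nilpotent elements and satisfies a polynomial identity, hence is nilpotent of bounded class, and this feeds back to bound $H$. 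Carrying out this last reduction carefully is the heart of the matter.

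Finally, when $w=\gamma_k$ the same scheme yields the sharper, bounded statement, the extra ingredient being that the factors $[w(G),{}_{i}G]/[w(G),{}_{i+1}G]=\gamma_{k+i}(G)/\gamma_{k+i+1}(G)$ are now finitely generated; combined with the quantitative form of the restricted Burnside problem this gives a bound on $|(w^q)(G)|$ in terms of $m$, $q$ and $k$ alone. For a general multilinear commutator word $w$ these factors need not be finitely generated, which is why in that generality one obtains only conciseness and not bounded conciseness.
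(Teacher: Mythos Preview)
Your preliminary reductions (passing to a finitely generated $G$, using Schur's theorem to make $H=v(G)$ abelian, and the pigeonhole argument giving exponent dividing $M=m!\,q$ on each factor $[w(G),{}_iG]/[w(G),{}_{i+1}G]$) are all sound. But the proposal is explicitly incomplete: you stop exactly where the difficulty lies, conceding that ``carrying out this last reduction carefully is the heart of the matter.'' Bounded exponent on the lower-central factors does not bound $H$ unless you also know the series terminates in boundedly many steps, i.e.\ that $\bigcap_i[w(G),{}_iG]$ meets $H$ trivially, and establishing that is the whole point. Your one-sentence sketch of a Lie-ring argument is not a proof: you must say which group the Lie ring is attached to, why its generators are ad-nilpotent (this needs the $w$-values themselves to have bounded $p$-power order, not merely that there are few $w^q$-values), and why it satisfies a polynomial identity. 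Note too that your scheme, if it worked, would actually yield \emph{bounded} conciseness of $w^q$ for every multilinear $w$; the paper leaves that open and proves it only for $w=\gamma_k$, which is a strong hint that your missing step is not routine.

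The paper's route is quite different and avoids the lower-central analysis altogether. Since $G$ is residually finite and the finite set $G_v$ (with $v=w^q$) is normal, one picks a normal subgroup $K$ of finite index with $K\cap G_v=\{1\}$; then every $w$-value in $K$ has order dividing $q$. A theorem of Shumyatsky (\emph{Quart.\ J.\ Math.}\ 2000), itself proved via Zelmanov's Lie methods, now gives that $w(K)$ is locally finite. Passing to $G/w(K)$ leaves a soluble-by-finite group, and the structural work of Section~2 (a normal series for $w(G)$ whose factors, with one bounded finite exception, are abelian and generated by cyclic subgroups lying inside $G_w$) forces $w(G)$ to have finite exponent; hence $v(G)\le w(G)$ is periodic, and Lemma~\ref{lemma5} converts ``$G_v$ finite and $v(G)$ periodic'' into ``$v(G)$ finite.'' The Zelmanov input is thus isolated in a single citation applied to $K$, where the hypothesis ``all $w$-values have order dividing $q$'' is genuinely available; your scheme would in effect have to reprove an equivalent of that cited theorem.
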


It remains unknown whether the word $w^q$ is actually concise in the class of all groups. Following \cite{brakrashu} we say that a word $w$ is boundedly concise in a class of groups $X$ if for every integer $m$ there exists a number $\nu=\nu(X,w,m)$ such that whenever $|G_w|\leq m$ for a group $G\in X$ it always follows that $|w(G)|\leq\nu$. Fern\'andez-Alcober and Morigi \cite{fernandez-morigi} showed that every word which is concise in the class of all groups is actually boundedly concise. Moreover they showed that whenever $w$ is a multilinear commutator word having at most $m$ values in a group $G$, one has $|w(G)|\leq (m-1)^{(m-1)}$. In view of our Theorem \ref{T1} it would be interesting to determine whether the word $w^q$ is boundedly concise in the class of residually finite groups. In this direction we obtain the following theorem.

\begin{theorem} \label{T2}
Let   $w=\gamma_{k}$ be the $k$th lower central word and $q$ a prime-power. The word $w^{q}$ is boundedly concise in the class of residually finite groups.
\end{theorem}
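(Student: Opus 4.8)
The plan is to combine Theorem~\ref{T1} with the bounded-conciseness machinery of Fern\'andez-Alcober--Morigi, exploiting the fact that $\gamma_k$ behaves well under the reduction. Suppose $G$ is residually finite and $|G_{w^q}|\le m$, where $w=\gamma_k$. By Theorem~\ref{T1} we already know $w^q(G)=\gamma_k(G)^q$ (up to the usual subtlety that $w^q(G)$ is generated by the $w^q$-values) is finite; the issue is to bound its order by a function of $m$ alone. The first step is a standard compactness/ultraproduct argument: if no such bound $\nu(m)$ existed, there would be a sequence of residually finite groups $G_i$ with $|(G_i)_{w^q}|\le m$ but $|w^q(G_i)|\to\infty$. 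I would like to pass to a limit, but an ultraproduct of residually finite groups need not be residually finite, so instead I would argue directly inside each $G_i$, extracting bounds step by step, and only invoke Theorem~\ref{T1} to guarantee finiteness at the intermediate stages.

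The key structural step is to handle the prime-power $q=p^a$ by separating the $p$-part from the $p'$-part of $\gamma_k(G)$. Since $w^q(G)$ is finite by Theorem~\ref{T1}, one can replace $G$ by $G/C$ where $C$ is a suitable subgroup so that $\gamma_k(G)$ itself is finite; indeed the set of $\gamma_k$-values need not be finite, but the set of $w^q$-values is, and on the finite residual quotients one controls things. More precisely, I would first show that the set of $\gamma_k$-values, call it $G_{\gamma_k}$, while possibly infinite, maps into a bounded-by-$m$ set modulo $w^q(G)$: each $\gamma_k$-value $g$ satisfies $g^q\in G_{w^q}$, so the image of $g$ in $G/w^q(G)$ has order dividing $q$, and the number of such images that arise is at most $m$ in a sense made precise by the structure of $q$-th powers. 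Then $\gamma_k(G)/w^q(G)$ is generated by at most $m$ elements of order dividing $q$ satisfying enough relations (it is a quotient of $\gamma_k$ of a group) to force $|\gamma_k(G)/w^q(G)|$ bounded --- here one uses that in a group where $\gamma_k$ has few values the nilpotent-of-class-$(k-1)$-by-(few values) structure applies, via the Fern\'andez-Alcober--Morigi bound $(m'-1)^{(m'-1)}$ with $m'$ the number of $\gamma_k$-values modulo $w^q(G)$. Combining, $|\gamma_k(G)|\le |\gamma_k(G)/w^q(G)|\cdot|w^q(G)|$, and it remains only to bound $|w^q(G)|$.

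The main obstacle, and the heart of the proof, is bounding $|w^q(G)|=|\gamma_k(G)^q|$ in terms of $m$ once we know $\gamma_k(G)$ is finite and $\gamma_k(G)/w^q(G)$ is bounded. For this I would use the hypothesis more carefully together with residual finiteness: write $N=\gamma_k(G)$, a finite group on which $G$ acts, with $[N:N^q]$ bounded and $|N_w^q|\le m$ in the relevant sense. One reduces to the case where $N$ is a $p$-group (the $p'$-part of $N^q$ equals the $p'$-part of $N$, which is then generated by $\gamma_k$-values that are themselves $q$-th powers, putting us in the already-treated regime) and then to the case where $N$ is elementary abelian by a Frattini/lower-central induction on the class of $N$. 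For elementary abelian $N$ of exponent $p$ acted on by $G$, the condition that $\gamma_k(G)$ has at most $m$ elements whose... precisely, that there are at most $m$ values of $w^q$, forces, via a counting argument on the $G$-orbits of $\gamma_k$-values inside $N$, that $N$ has bounded rank, hence bounded order. The delicate point is that $q$-th powers of $\gamma_k$-values need not themselves be $\gamma_k$-values of a related group, so the reduction to elementary abelian must be done at the level of the whole configuration $(G,N)$ and not by naively quotienting; I expect this bookkeeping, rather than any single deep fact, to be where the real work lies, and where the restriction $w=\gamma_k$ (as opposed to a general multilinear commutator) is genuinely used, since $\gamma_k$ of a quotient is the image of $\gamma_k$, which keeps the inductive configuration of the same shape.
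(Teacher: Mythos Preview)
Your proposal has a genuine gap at its core. The step ``each $\gamma_k$-value $g$ satisfies $g^q\in G_{w^q}$, so the image of $g$ in $G/w^q(G)$ has order dividing $q$, and the number of such images that arise is at most $m$'' does not work: knowing that $g^q$ lies in a set of size $m$ gives no bound on the number of cosets $g\,w^q(G)$, so you cannot conclude that $\gamma_k(G)/w^q(G)$ is boundedly generated, and hence you cannot feed anything into the Fern\'andez-Alcober--Morigi bound. The later ``counting argument on $G$-orbits'' in the elementary-abelian reduction is likewise a placeholder with no mechanism behind it. In short, you never isolate a subgroup or quotient in which the \emph{law} $\gamma_k^q\equiv 1$ holds, and without that the Zelmanov-type input (which you correctly sense must enter somewhere) has nothing to bite on.

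The paper's route is quite different and avoids all of this. First, it reduces to \emph{finite} groups immediately: any finite quotient $K$ of $G$ inherits $|K_{v}|\le m$, so it suffices to bound $|v(K)|$ uniformly for finite $K$ (then residual finiteness gives the bound for $G$; no ultraproducts are needed). Second, inside a finite $G$ one may assume $G$ is $mk$-generated, take $H=C_G(G_v)$ of index at most $m!$, and observe that in $H/Z(H)$ the law $\gamma_k^q\equiv 1$ holds. This is the decisive move: Proposition~\ref{bounded} (a Zelmanov-based statement for finitely generated finite groups satisfying $\gamma_k^q\equiv 1$) now bounds the exponent of $\gamma_k(H/Z(H))$, Mann's theorem bounds the exponent of $\gamma_k(H)'$, and one is reduced to the soluble-by-finite situation where Corollary~\ref{finitexpo} bounds the exponent of $w(G)$ and Lemma~\ref{lemma5} finishes. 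The special role of $w=\gamma_k$ is exactly that Proposition~\ref{bounded} is available; your proposal never reaches a configuration where such a result can be invoked.
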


Recall that the word $\gamma_{k}$ is defined inductively by the formulae
\[
\gamma_1=x_1,
\qquad
\gamma_k=[\gamma_{k-1},x_k]=[x_1,\ldots,x_k],
\quad
\text{for $k\ge 2$.}
\]
The corresponding verbal subgroup $\gamma_k(G)$ is the familiar $k$th term of the lower central series of $G$. It remains unknown whether the word $\gamma_{k}^q$ is concise in the class of all groups. 

The proofs of both Theorem \ref{T1} and Theorem \ref{T2} are based on techniques created by Zelmanov in his solution of the Restricted Burnside Problem (see \cite{korea} or \cite{orko}). Some adjustments to those techniques are described in \cite{eueu}. We have been unable to decide whether the condition that $q$ is a prime-power is necessary in our results.

Throughout the paper we use the abbreviation, say, ``$(a,b,\dots)$-bounded'' for ``bounded above in terms of $a,b,\dots$ only''.

\section{Preliminaries}
In this section we collect some results that will be useful later.  Recall that a group $G$ is said to be perfect if $G=G'$. As usual if $L,M\leq G$, we write $[L,M]$ to denote the subgroup generated by all commutators $[l,m]$ where $l\in L$ and $m\in M$.
\begin{lemma}
\label{lemma1}
Let $G$ be a perfect group and $A$  a normal abelian subgroup of $G$. Then $[A,G]=[A,G,G]$.
\end{lemma}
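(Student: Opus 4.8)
The plan is to exploit that $A$ is abelian and normal, so the commutator map is essentially additive in the $A$-variable and $G$ acts on $A$ in a way that factors through the abelianization $G/G'$ on the quotient $[A,G]/[A,G,G]$. First I would set $B=[A,G,G]$, which is a normal subgroup of $G$ contained in $A$ (since $A$ is normal, $[A,G]\le A$, hence $[A,G,G]\le A$), and pass to the quotient $\bar G=G/B$. Writing $\bar A=AB/B$ and $\bar C=[A,G]B/B=[\bar A,\bar G]$, we have by construction $[\bar C,\bar G]=1$, i.e. $\bar C$ is central in $\bar G$; in particular $\bar C\le Z(\bar A)$ and $\bar C$ is centralized by $\bar G$. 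The goal is to show $\bar C=1$, i.e. $[A,G]\le B$.

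Next I would use the standard commutator identity: for $a\in A$ and $x,y\in G$,
\[
[a,xy]=[a,y]\,[a,x]^{y}.
\]
Since $A$ is abelian and $[A,G]\le A$, modulo $B=[A,G,G]$ the conjugation by $y$ acts trivially on $[a,x]\in[A,G]$, so in $\bar G$ we get $[\bar a,\bar x\bar y]=[\bar a,\bar y][\bar a,\bar x]=[\bar a,\bar x][\bar a,\bar y]$. Thus the map $x\mapsto[\bar a,\bar x]$ is a homomorphism from $G$ into the abelian group $\bar C$, and it kills $G'$. Therefore $[\bar a,\bar x]=1$ whenever $x\in G'$; since $A$ is abelian this also trivially holds for $x\in A$, and combining, $[\bar A,\overline{G'}]=1$. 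But $G$ is perfect, so $G=G'$ and hence $[\bar A,\bar G]=1$, which forces $\bar C=[\bar A,\bar G]=1$. Unwinding, $[A,G]\le B=[A,G,G]$, and the reverse inclusion $[A,G,G]\le[A,G]$ is trivial, giving $[A,G]=[A,G,G]$.

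I do not expect a genuine obstacle here; the only point requiring a little care is checking that the conjugation action on $[a,x]$ is trivial modulo $[A,G,G]$, which is exactly the statement that $[[A,G],G]\le[A,G,G]$ — true by definition — and that the homomorphism $x\mapsto[\bar a,\bar x]$ is well-defined into the central (hence abelian) subgroup $\bar C$. An alternative, essentially equivalent, route is to observe that $A/[A,G]$ is a trivial $G$-module and then apply the three-subgroup lemma to $[A,G,G]$ versus $[G,G,A]=[G,A]$ after reducing modulo an appropriate term; but the direct computation above seems cleanest and avoids invoking the three-subgroup lemma.
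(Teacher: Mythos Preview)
Your proof is correct. Both you and the paper begin by reducing modulo $[A,G,G]$; the difference is what happens next. The paper simply observes that once $[A,G,G]=1$ one has $[G,A,G]=[A,G,G]=1$ as well, and then invokes the Three Subgroups Lemma to conclude $[G,G,A]=1$, whence $[A,G]=[A,G']=1$ since $G$ is perfect. You instead bypass the Three Subgroups Lemma by the explicit commutator identity $[a,xy]=[a,y][a,x]^y$, which in the quotient shows $x\mapsto[\bar a,\bar x]$ is a homomorphism into an abelian group and therefore kills $G'=G$. This is essentially a hands-on proof of the special case of the Three Subgroups Lemma needed here, so your argument is slightly longer but more self-contained, while the paper's is shorter at the cost of citing an external lemma. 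You even flag this alternative at the end of your proposal; that is exactly the route the authors take.
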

\begin{proof}

Without loss of generality  assume that $[A,G,G]=1$.  Note that $[G,A,G]=[A,G,G]=1$. In view of the Three  Subgroups Lemma \cite[5.1.10]{rob}, we have $[G,G,A]=1$. Since $G$ is perfect, we conclude that $[A,G]=1$, as desired.
\end{proof}

 Let $w$ be a group-word and $G$ a group. A subgroup $N\leq G$ will be called $w$-subgroup  if $N$ is generated by cyclic subgroups contained in $G_w$. By weight of a multilinear commutator we mean the number of variables involved in the word. It is clear that any multilinear commutator  $w$ of weight $k\geq 2$ can be written in the form $w=[w_{1},w_{2}]$ where $w_{1}$ and $w_{2}$ are multilinear commutators of smaller weights.
 
\begin{lemma}
\label{lemma2}
Let $G$ be a perfect group  and $A$  a normal  abelian  subgroup of $G$. Then $[A,G]$ is a $w$-subgroup for any multilinear commutator $w$.
\end{lemma}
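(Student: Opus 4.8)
The plan is to argue by induction on the weight $k$ of $w$. If $k=1$ then $w=x_1$, so $G_w=G$ and $[A,G]$ is trivially a $w$-subgroup. Before the inductive step I would record the auxiliary fact that \emph{in a perfect group $G$ one has $v(G)=G$ for every multilinear commutator word $v$}. This follows by an easy induction on the weight: writing $v=[v_1,v_2]$ with $v_1,v_2$ of smaller weight, one first checks that $v(G)=[v_1(G),v_2(G)]$. The inclusion $\subseteq$ is immediate, and for $\supseteq$ one expands each generator $[a,b]$ of $[v_1(G),v_2(G)]$ (with $a\in v_1(G)$, $b\in v_2(G)$) by commutator calculus into a product of conjugates of elements $[u,z]^{\pm1}$ with $u\in G_{v_1}$, $z\in G_{v_2}$; since $G_{v_1}$ and $G_{v_2}$ are closed under conjugation each $[u,z]$ is a $v$-value, and since $v(G)$ is normal in $G$ the whole product lies in $v(G)$. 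Hence $v(G)=[v_1(G),v_2(G)]$, and the induction hypothesis gives $v_1(G)=v_2(G)=G$, so $v(G)=[G,G]=G$ because $G$ is perfect.

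Now suppose $k\geq2$ and write $w=[w_1,w_2]$ with $w_1,w_2$ multilinear commutators of weight less than $k$. Put $B=[A,G]$. Since $A$ is normal, $B\leq A$, so $B$ is an abelian normal subgroup of $G$. Applying Lemma \ref{lemma1} to $A$ gives $[A,G]=[A,G,G]=[B,G]$, that is, $B=[B,G]$. By the induction hypothesis applied to $w_1$ (same $G$ and $A$), the subgroup $[A,G]=B$ is a $w_1$-subgroup; the family of all cyclic subgroups of $B$ contained in $G_{w_1}$ is invariant under conjugation by $G$ (because $B$ is normal and $G_{w_1}$ is a set of verbal values), so we may choose a $G$-invariant generating set $Y$ of $B$ such that every $y\in Y$ belongs to some cyclic subgroup $C_y\leq B$ with $C_y\subseteq G_{w_1}$.

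It remains to exhibit a generating set of $B$ consisting of cyclic subgroups contained in $G_w$. By the auxiliary fact $w_2(G)=G$, hence $B=[B,G]=[B,w_2(G)]$. Since $Y$ and $G_{w_2}$ are $G$-invariant generating sets of $B$ and of $w_2(G)$ respectively, commutator calculus gives $B=[B,w_2(G)]=\langle [y,v] : y\in Y,\, v\in G_{w_2}\rangle$. Each such $[y,v]$ is a $w$-value, since $y\in G_{w_1}$ and $v\in G_{w_2}$ force $[y,v]=w(\dots)$ for suitable arguments. Moreover $y\in B\leq A$ with $A$ abelian and normal, so $y$ and $y^v$ commute and lie in $A$; therefore $[y,v]^n=(y^{-1}y^v)^n=[y^n,v]$ for every integer $n$, and since $y^n\in C_y\subseteq G_{w_1}$ the element $y^n$ is a $w_1$-value, so $[y^n,v]$ is again a $w$-value. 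Thus $\langle [y,v]\rangle\subseteq G_w$ for all admissible $y,v$, and these cyclic subgroups generate $B$. Consequently $[A,G]=B$ is a $w$-subgroup, which completes the induction.

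I expect the delicate point to be this last step, namely upgrading ``$[A,G]$ is generated by elements of $G_w$'' to ``$[A,G]$ is generated by cyclic subgroups contained in $G_w$''. This is precisely where the hypotheses enter in an essential way: perfectness of $G$ (through Lemma \ref{lemma1} and through $w_2(G)=G$) lets one write the generators of $[A,G]$ as commutators $[y,v]$ in which $y$ is a $w_1$-value drawn from a cyclic subgroup of the \emph{abelian} normal subgroup $A$, and it is exactly this abelianness that makes $[y,v]^n=[y^n,v]$, keeping all powers of $[y,v]$ inside $G_w$. A secondary point that needs attention is to keep every generating set in sight invariant under conjugation by $G$, so that the commutator-calculus identities do not lead outside the relevant subgroups.
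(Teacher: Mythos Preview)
Your proof is correct and follows essentially the same route as the paper's: induction on the weight of $w$, writing $w=[w_1,w_2]$, invoking Lemma~\ref{lemma1} to obtain $[A,G]=[[A,G],G]$, using the induction hypothesis to view $[A,G]$ as a $w_1$-subgroup, perfectness to get $G=w_2(G)$, and finally the abelianness of $A$ to check that $[y,v]^n=[y^n,v]\in G_w$. The paper's argument is simply terser, asserting both the auxiliary fact $w_2(G)=G$ and the commutator-calculus reduction without the detailed justification you supply.
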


\begin{proof} Choose a multilinear commutator word $w$ and let $k$ be the weight of $w$. If $k=1$, then $w=x$ and $[A,G]$ is a $w$-subgroup. Thus, assume that $k\geq 2$ and show that $[A,G]$ is a $w$-subgroup using induction on $k$. Let $w=[w_{1},w_{2}]$, where $w_{1}$ and $w_{2}$ are two multilinear commutators of smaller weights, say $k_{1}$ and $k_{2}$, such that $k=k_{1}+k_{2}$. By the  inductive hypothesis  $[A,G]$ is a $w_{1}$-subgroup in $G$. Moreover since $w$ is a multilinear commutator and $G$ is perfect, we have $G = w_{2}(G)$, and so $G$ is generated by $w_{2}$-values. By Lemma \ref{lemma1} we know that  $[A,G]=[A,G,G]$. Hence $[A,G]$ is generated by elements of the form $[x,y]$, where $\langle x\rangle\subseteq G_{w_{1}}$ and $y\in G_{w_{2}}$. Since $[A,G]$ is abelian, we obtain that 
$$[x,y]^{j}=[x^{j},y]\in [G_{w_{1}},G_{w_{2}}]\subseteq G_{w}$$ for any $j$. 
Thus we conclude that $[A,G]$ is a $w$-subgroup, as desired.  
\end{proof}

Another result of similar nature is the following lemma.

\begin{lemma}
\label{lemma3}
Let $K$ be a normal subgroup of a group $G$ and $w=[w_{1},w_{2}]$ a multilinear commutator word. Suppose that $K$ is nilpotent of class two.  If  $K/Z(K)$ is  a $w_{1}$-subgroup in $G/Z(K)$ and if it is generated by $w_{2}$-values in $G/Z(K)$,   then $K'$ is a $w$-subgroup in $G$.
\end{lemma}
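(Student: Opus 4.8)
The plan is to imitate the argument of Lemma \ref{lemma2}, replacing the role of ``$[A,G]$ is abelian'' by ``$K$ is nilpotent of class two'', i.e.\ by the fact that in such a $K$ the commutator map $K\times K\to K'$ is bilinear and factors through $K/Z(K)$. First I would set up convenient generators. Since $K/Z(K)$ is a $w_1$-subgroup in $G/Z(K)$, it is generated by cyclic subgroups $\langle\bar x_\alpha\rangle$ with every power $\bar x_\alpha^{\,i}$ a $w_1$-value in $G/Z(K)$; and since the set of $v$-values of a quotient is the image of the set of $v$-values of the group, each such power lifts to an element of $G_{w_1}$, which moreover lies in $K$ because its image lies in $K/Z(K)$. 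In particular we may take the $x_\alpha$ themselves in $K\cap G_{w_1}$. Similarly, $K/Z(K)$ is generated by elements $\bar y_\beta$ that are $w_2$-values in $G/Z(K)$, with representatives $y_\beta\in K\cap G_{w_2}$; note that here we only need the $\bar y_\beta$ and not their powers, which matches the weaker hypothesis on the $w_2$-side.

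Next I would do the class-two bookkeeping. As $K$ has class two, $K'\leq Z(K)$; hence for $a,b\in K$ and $z,z'\in Z(K)$ one has $[az,bz']=[a,b]$, so the commutator depends only on the cosets modulo $Z(K)$, and the map is bilinear. Writing $K=\langle x_\alpha:\alpha\rangle Z(K)=\langle y_\beta:\beta\rangle Z(K)$ and using these identities gives $K'=[K,K]=[\langle x_\alpha\rangle,\langle y_\beta\rangle]$, and then expanding commutators of products multilinearly (all the relevant commutators being central, hence commuting with one another) yields $K'=\langle[x_\alpha,y_\beta]:\alpha,\beta\rangle$.

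It then remains to check that each cyclic subgroup $\langle[x_\alpha,y_\beta]\rangle$ is contained in $G_w$, and this is the one point that needs the full strength of the hypothesis rather than a naive substitution; it is the analogue of the displayed computation in the proof of Lemma \ref{lemma2}. Since $[x_\alpha,y_\beta]\in K'\leq Z(K)$ it commutes with $x_\alpha$, so $[x_\alpha,y_\beta]^{\,j}=[x_\alpha^{\,j},y_\beta]$. Now $x_\alpha^{\,j}$ need not be a $w_1$-value, but $\bar x_\alpha^{\,j}$ is a $w_1$-value in $G/Z(K)$, so $\bar x_\alpha^{\,j}=\bar x'$ for some $x'\in G_{w_1}$, whence $x_\alpha^{\,j}=zx'$ with $z\in Z(K)$ and therefore $[x_\alpha^{\,j},y_\beta]=[z,y_\beta]^{x'}[x',y_\beta]=[x',y_\beta]$, which belongs to $G_w$ because $x'\in G_{w_1}$, $y_\beta\in G_{w_2}$ and $w=[w_1,w_2]$. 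Thus every power of $[x_\alpha,y_\beta]$ lies in $G_w$, so $\langle[x_\alpha,y_\beta]\rangle\subseteq G_w$, and $K'$, being generated by these cyclic subgroups, is a $w$-subgroup of $G$. The main obstacle, then, is precisely arranging that the cyclic subgroups generating $K'$ sit inside $G_w$; everything else is the bilinear commutator calculus in a class-two group.
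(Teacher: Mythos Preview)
Your argument is correct and follows essentially the same line as the paper's proof: pick generators $X$ of $K$ with $x^{i}\in G_{w_1}Z(K)$ for all $i$, generators $Y$ with $y\in G_{w_2}Z(K)$, use class-two bilinearity to get $K'=\langle[x,y]:x\in X,\ y\in Y\rangle$, and then compute $[x,y]^{i}=[x^{i},y]\in[G_{w_1}Z(K),G_{w_2}Z(K)]\subseteq G_{w}$. Your write-up simply unpacks the last containment (splitting off the $Z(K)$-factor via the commutator identity) more explicitly than the paper does.
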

\begin{proof}
By the hypothesis $K$ has a generating set $X$ such that $x^{i}\in G_{w_{1}}Z(K)$, for every $x\in X$ and every integer $i$. Similarly $K$ has a generating set $Y$ such that $y\in G_{w_{2}}Z(K)$ for every $y \in Y$.

Since $K$ is nilpotent of class two, it follows that $K'$ is  generated by elements of the form $[x,y]$, where $x\in X$ and $y \in Y$. Now we have  $$[x,y]^{i}=[x^{i},y]\in [G_{w_{1}}Z(K),G_{w_{2}}Z(K)]\subseteq G_{w}$$ for any $i$.
Thus $K'$ is a $w$-subgroup in $G$, as desired.   
\end{proof}

The next lemma ensures us that in any soluble-by-finite group $G$ we can find a term of the derived series of $G$ which is a perfect group.  
\begin{lemma}\label{lemma4} Let $G$ be a group having a normal soluble subgroup with finite index $n$  and derived length $d$. Then $G^{(i)}=G^{(i+1)}$ for some integer $i\leq d+n$.
\end{lemma}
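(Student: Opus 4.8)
The plan is to argue that the derived series of $G$ cannot strictly descend for too long, by combining two separate counting arguments: one controlling how the finite index is "used up'' and one controlling how the solubility is used up. Let $N$ be the normal soluble subgroup of index $n$ and derived length $d$.

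First I would handle the passage through the finite part. Consider the chain $G\geq G'\geq G''\geq\cdots$. For each $i$, the group $G^{(i)}N/N$ is a subgroup of the finite group $G/N$, which has order $n$; moreover $G^{(i)}N/N = (G/N)^{(i)}$, so these images form a descending chain of subgroups of $G/N$ and hence stabilize after at most $\log_2 n \leq n$ steps — in fact once $(G/N)^{(i)}=(G/N)^{(i+1)}$ the image stays fixed forever. So there is an integer $j\leq n$ such that $G^{(j)}N=G^{(j+1)}N=\cdots$; call this common subgroup $H$, so $H\geq N$ and $H/N$ is perfect. The key point I then want is that $G^{(j)}\cap N$ already contains enough of $N$: since $H=G^{(j)}N$, we have $H' = (G^{(j)})' [G^{(j)},N]\,N' \leq G^{(j+1)}\cdot N$, and iterating, $H^{(i)} \leq G^{(j+i)} N^{(\text{something})}$. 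The cleaner route is to observe that for $i\geq j$ we have $G^{(i)}N = H$, so $G^{(i)} \cap N$ is a normal subgroup of $G^{(i)}$ with $G^{(i)}/(G^{(i)}\cap N)\cong H/N$; thus $G^{(i+1)} = (G^{(i)})' \geq$ the part of $G^{(i)}$ needed so that $G^{(i+1)}(G^{(i)}\cap N) \supseteq (G^{(i)})'(G^{(i)}\cap N)$ maps onto $(H/N)' = H/N$, whence $G^{(i+1)}(G^{(i)}\cap N) = G^{(i)}$.

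Second, from step one, for $i\geq j$ set $N_i = G^{(i)}\cap N$; this is a descending chain of subgroups of $N$, each normal in $G$, and we have $G^{(i)} = G^{(i+1)} N_i$. Now $N_i$ is soluble of derived length at most $d$, and I claim $N_{i+1}\geq N_i'$: indeed $N_i' = (G^{(i)}\cap N)' \leq (G^{(i)})'\cap N = G^{(i+1)}\cap N = N_{i+1}$. Hence the chain $N_j \geq N_{j+1}\geq\cdots$ has $N_{i+1}\geq N_i'$, so $N_{j+d}\geq N_j^{(d)} = 1$ forces nothing directly, but rather: after $d$ steps $N_{j+d} \leq N_j^{(d)}$? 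No — I want the reverse containment chain to collapse. The correct assembly: since $N_i' \leq N_{i+1}$, the quotients $N_i/N_{i+1}$ are abelian, and $N_j$ has derived length $\leq d$, so $N_j^{(d)}=1$; combined with $N_{i}^{(1)}\le N_{i+1}$ we get $N_{j+d} \le N_j^{(d)} = 1$, i.e. $N_{j+d}=1$. But once $N_i = 1$ we have $G^{(i)} = G^{(i+1)}\cdot 1 = G^{(i+1)}$. Taking $i = j+d \leq n+d$ gives the conclusion $G^{(i)} = G^{(i+1)}$ with $i\leq d+n$.

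The main obstacle I anticipate is making the first step fully rigorous: the inference $G^{(i)} = G^{(i+1)} N_i$ for $i$ past the stabilization point of $G/N$ requires care, because a priori $(G/N)^{(i)}$ stabilizing only tells us $G^{(i)} N = G^{(i+1)} N$, and one must then chase this equality down into $N$ to conclude $G^{(i)} = G^{(i+1)}(G^{(i)}\cap N)$ rather than just $G^{(i)} = G^{(i+1)}(N)$. This is the standard Dedekind-modular-law manoeuvre: from $G^{(i)} \leq G^{(i+1)} N$ and $G^{(i+1)}\leq G^{(i)}$ one gets $G^{(i)} = G^{(i+1)}(G^{(i)}\cap N)$ by modularity, so it is routine but deserves an explicit line. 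Everything else is elementary bookkeeping with the derived series, and the bound $i\le d+n$ (indeed $i \le d + \log_2 n$, but $n$ suffices) drops out of adding the two counts.
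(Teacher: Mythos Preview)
Your second step contains a genuine error. From $N_i' \leq N_{i+1}$ one obtains by induction $N_j^{(k)} \leq N_{j+k}$, not the reverse inclusion; so $N_j^{(d)}=1$ yields only the triviality $1\leq N_{j+d}$, and the asserted conclusion $N_{j+d}\leq N_j^{(d)}=1$ simply does not follow. (You flag the worry yourself and then reverse the inequality without justification.) Worse, the intended conclusion $N_{j+d}=1$ is false in general. Take $G=\mathrm{SL}(2,5)$ and $N=Z(G)\cong C_2$, so $d=1$ and $n=60$. Since $G$ is perfect we have $G^{(i)}=G$ for every $i$, hence $N_i=G^{(i)}\cap N=N\neq 1$ for every $i$. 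The derived series stabilises at $i=0$, yet the $N_i$ never vanish. What you actually need is only $N_i=N_{i+1}$ (equivalently $N_i\leq G^{(i+1)}$), not $N_i=1$; but the containment $N_i'\leq N_{i+1}$ does not give that either, and there is no evident way to bound in terms of $d$ alone how long the chain $N_j\geq N_{j+1}\geq\cdots$ can strictly descend.

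The paper avoids the whole two-stage decomposition with a one-line observation: every soluble quotient $G/K$ has the normal subgroup $NK/K$ of derived length at most $d$ with factor group of order at most $n$, hence has derived length at most $d+n$. Applying this with $K=G^{(d+n+1)}$ forces $(G/G^{(d+n+1)})^{(d+n)}=1$, i.e.\ $G^{(d+n)}=G^{(d+n+1)}$.
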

\begin{proof}
It is clear that  any soluble quotient of $G$ must have derived length at most $d+n$. Therefore  we have $G^{(i)}=G^{(i+1)}$ for some $i\leq d+n$.   
\end{proof}

To any finite ordered set of subgroups of a group $G$, say $\{N_{1},\ldots, N_{s}\}$, we associate the \emph{triple} $(s,s-i,s-j)$, where $s$ is the number  of subgroups in the set, $i$ is the maximal index such that $N_{i} $ is central in $G$ and $j$ is the maximal index such that $N_{j}$ is abelian.  If there are no central  subgroups, or no abelian subgroups in the set, then we put $i=0$ or $j=0$, respectively. The set of all such possible triples is naturally endowed with the lexicographical order. In the sequel we say that one triple is smaller than another meaning that it is smaller with respect to the lexicographical order. Our goal in the remaining part of this section is to prove that the verbal subgroup $w(G)$ of a soluble-by-finite group $G$ possesses  a normal series with some very specific properties. Similar series were considered in \cite{brakrashu} and \cite{fernandez-morigi}. We start with the case where $G$ is perfect.

\begin{proposition}
\label{perfect case}
There exist $(d,n)$-bounded integers $m$ and $k$ with the following property: 
Let $G$ be  a perfect group having a normal soluble subgroup  of finite index $n$  and with derived length $d$. Then $G$ has a characteristic series $$1=T_{1}\leq T_{2}\leq \cdots \leq T_{m}\leq G$$ such that every quotient $T_{i}/T_{i-1}$ is an abelian $w$-subgroup in $G/T_{i-1}$ for every multilinear commutator word $w$ and  the index $[G:T_{m}]$ is at most  $k$. 
\end{proposition}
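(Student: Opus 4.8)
\medskip
\noindent\emph{Plan of proof.} The plan is to induct on the derived length $d$ of a normal soluble subgroup of $G$, which we may take to be the soluble radical $R$: it is characteristic in $G$, of finite index dividing $n$, and of $(d,n)$-bounded derived length, which we relabel $d$. If $d=0$ then $G$ is finite of order $n$ and the trivial series $T_1=1$ works with $m=1$, $k=n$. For $d\ge 1$ we start from the derived series $1=R^{(d)}\le\cdots\le R^{(0)}=R\le G$, a characteristic series with abelian factors in which only $G/R$ is not yet of the required form, and refine it from the bottom.

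First put $A=R^{(d-1)}$, which is abelian and characteristic in $G$. By Lemma \ref{lemma2} the subgroup $[A,G]$ is a characteristic abelian $w$-subgroup of $G$ for every multilinear commutator word $w$; by Lemma \ref{lemma1} it equals $[[A,G],G]$, so $A/[A,G]$ is central in $G/[A,G]$. Since $G/A$ is perfect with soluble radical $R/A$ of derived length $d-1$, the inductive hypothesis produces a suitable characteristic series of $G/A$; pulling it back to $G$ and prepending $1\le[A,G]\le A$ gives a candidate characteristic series of $G$, all of whose factors are abelian $w$-subgroups except possibly $A/[A,G]$. The key point is that $A/[A,G]$ is the derived subgroup of a class-two nilpotent section: $K:=R^{(d-2)}/[A,G]$ is normal in $G/[A,G]$, has $K'=A/[A,G]$, and has class at most two because $[R^{(d-1)},R^{(d-2)}]\le[R^{(d-1)},G]=[A,G]$. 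Now Lemma \ref{lemma2}, applied inside the perfect group $G/A$, shows that $[R^{(d-2)},G]A/A$ is simultaneously a $w_1$-subgroup and is generated by $w_2$-values for a factorisation $w=[w_1,w_2]$; provided this subgroup is all of $R^{(d-2)}/A\cong K/Z(K)$, Lemma \ref{lemma3} makes $A/[A,G]$ the desired $w$-subgroup. If instead $R^{(d-2)}/R^{(d-1)}$ has a nontrivial $G$-trivial quotient, the same device is applied one derived level higher (with $R^{(d-3)}$ playing the role of the class-two section), and so on up the derived series. At the top level the construction stops rather than recurses, because $G$ is perfect: the residual $G$-trivial quotient of $R/R'$ is a central subgroup of the perfect group $G/[R,G]R'$ whose quotient by it is the finite group $G/R$, so by Schur's theorem it is finite of $(d,n)$-bounded order and is simply absorbed into the bounded-index top $G/T_m$.

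To make this process terminate and to read off $(d,n)$-bounded values for $m$ and $k$, I would run it along the lexicographically ordered triples $(s,s-i,s-j)$ attached above to the current candidate series: each admissible move — splitting off a $[\,\cdot\,,G]$-term via Lemma \ref{lemma2}, converting a central derived subgroup into a $w$-subgroup via Lemma \ref{lemma3}, or absorbing a section that has become finite of bounded order into the top — strictly decreases the triple, and well-foundedness of the order forces the refinement to halt at a series of the required shape. I expect the main obstacle to be precisely this bookkeeping: one must arrange that every $T_i$ remains characteristic in $G$ throughout, that the class-two nilpotent sections needed for Lemma \ref{lemma3} are genuinely available (which dictates the order in which the derived series is refined), and that the clause ``$w$-subgroup for every multilinear commutator $w$'' is never lost, all while keeping an explicit $(d,n)$-bound on the length of the descending chain of triples and hence on $m$ and $k$.
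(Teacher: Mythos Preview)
Your proposal is essentially the paper's proof: outer induction on the derived length of the soluble radical, inner descent on the lexicographic triple $(s,s-i,s-j)$, with Lemmas~\ref{lemma2} and~\ref{lemma3} supplying the abelian $w$-subgroup at each step and Schur's theorem disposing of the terminal central case. The paper organises the inner induction a bit more cleanly---at each stage it locates the first non-central term $K_{i+1}$, sets $D=[K_{i+1},G]$ or $D=K_{i+1}'$ according as $K_{i+1}$ is abelian or not, and recurses in $G/D$, which sidesteps your ``go one level higher'' manoeuvre---and note that your $K/Z(K)$ is in general only a \emph{quotient} of $R^{(d-2)}/A$ rather than isomorphic to it, though this is harmless since the $w$-subgroup property passes to quotients.
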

\begin{proof}
Since $G=G'$, we have $G=w(G)$ for any $w$.  Let $S$ be the soluble radical of $G$ and let $d_{1}$ be  the derived length of $S$. It is clear that   $d_{1}\leq d+n$.  If $d_{1}=0$, then $G$ is finite of order $n$ and  so there is nothing to prove. 

Thus, suppose that $d_{1}\geq 1$ and use induction. Let $A$ be the last nontrivial term of the derived series of $S$.  
We consider the quotient group $\overline{G}=G/A$ and, as usual, if $X\leq G$, we  denote by  $\overline{X}$  the image of $X$ in $\overline{G}$. By the inductive hypothesis $\overline{G}$ has a  characteristic series $$\overline1=\overline{K_{1}}\leq\overline{K_{2}} \leq \cdots \leq \overline{K_{s}}\quad \quad (*)$$ such that every quotient $\overline{K_{i}}/\overline{K}_{i-1}$ is an abelian $w$-subgroup in $\overline{G}/\overline{K}_{i-1}$  for every $w$ while $s$ and the index  $[\overline{G}: \overline{K_{s}}]$ are  both $(d,n)$-bounded.  We choose the series $(*)$ in such a way that the triple  $(s,s-i,s-j)$ associated with  the set $\{K_{1}=A,K_{2},\ldots,K_{s}\}$ is  as small as possible.  We now argue by induction on the triple.   

If $K_{s}$ is central in $G$, then the index $[G:Z(G)]$ is $(d,n)$-bounded. Thus, by Schur's theorem \cite[10.1.4]{rob} $|G|$ is finite and $(d,n)$-bounded so there is nothing to prove.
Suppose that $K_{s}$ is not central in $G$. By the definition of the triple,  $K_{i+1}$ is the first term which is not central in $G$. We define a subgroup $D$ in the following way: 
\begin{itemize}
\item[(1)] if $K_{i+1}$ is abelian, then  $D=[K_{i+1},G]$;
\item[(2)] if $K_{i+1}$ is not abelian, then  $D=[K_{i+1},K_{i+1}]$.
\end{itemize}
 In the case (1) Lemma \ref{lemma2} tells us that  $D$ is a $w$-subgroup in $G$ for every multilinear commutator word $w$. In  the case (2) the subgroup  $K_{i+1}$ is nilpotent of class two and we are in the position to apply Lemma \ref{lemma3}. Indeed, note that $K_{i+1}/Z(K_{i+1})$ is isomorphic to a quotient  group of  $K_{i+1}/K_{i}$, which is a $w$-subgroup in $G/K_{i}$ for every multilinear commutator word $w$ and moreover $K_{i}\leq Z(K_{i+1})$. Thus, also in  case (2) we conclude that $D$ is  a $w$-subgroup in $G$ for all $w$. It is clear that $D$ is  abelian.

Next, we consider the quotient group $\widetilde{G}=G/D$. Observe that in the case (1) the subgroup  $\widetilde{K_{i+1}}$ is central in $\widetilde{G}$ and, in the case (2),  the subgroup $\widetilde{K_{i+1}}$ is abelian. Therefore in both cases the triple associated to the  set $\{\widetilde{K_{1}},\widetilde{K_{2}},\ldots, \widetilde{K_{s}}\}$ is smaller.  By induction on the triple we deduce that $\widetilde{G}$ has a characteristic series 
$$\widetilde{1}=\widetilde{D_{1}}\leq\widetilde{D_{2}}\leq\cdots\leq\widetilde{D_{l}}$$ such that  every quotient $\widetilde{D_{i}}/\widetilde{D_{i-1}}$ is an abelian $w$-subgroup in $\widetilde{G}/\widetilde{D_{i-1}}$ for all $w$, and the index $[\widetilde{G}: \widetilde{D_{l}}]$ and $l$ are $(d,n)$-bounded. Put $T_{1}=1, T_{2}=D, \ldots, T_{l+1}=D_{l}$. The characteristic series 
$$1=T_{1}\leq T_{2}\leq\cdots\leq T_{l+1}$$ has all the required properties. This completes the proof.
\end{proof}

Now we analyze  the general case where $G$ is a soluble-by-finite group. 

\begin{proposition}
\label{general case}
Let $w$ be a multilinear commutator.  There exist a $(d,n,w)$-bounded integer $s$ and a $(d,n)$-bounded integer $h$ with the following property: 
Let $G$ be  a group having a normal soluble subgroup  of finite index $n$  and derived length $d$. Then $G$ has a  series of normal subgroups  
$$1=T_{1}\leq T_{2}\leq \cdots \leq T_{s}=w(G)$$ such that every quotient $T_{i}/T_{i-1}$ is an  abelian $w$-subgroup  in $G/T_{i-1}$, except possibly one quotient whose order is at most $h$. 
\end{proposition}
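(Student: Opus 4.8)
The plan is to reduce to the perfect case established in Proposition~\ref{perfect case} and then to handle the soluble quotient that remains. By Lemma~\ref{lemma4} there is an integer $i\le d+n$ with $G^{(i)}=G^{(i+1)}$; put $P=G^{(i)}$. Then $P$ is a perfect characteristic subgroup of $G$, the quotient $G/P$ is soluble of derived length at most $i\le d+n$, and $P$ has a normal soluble subgroup of index at most $n$ and derived length at most $d$ (take $P\cap S$, where $S$ is the soluble radical of $G$). Since $P$ is perfect, $w(P)=P$, so $P\le w(G)$.

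First I would apply Proposition~\ref{perfect case} to $P$, obtaining a characteristic series $1=T_1\le T_2\le\cdots\le T_m\le P$ of $P$ with $m$ and $k:=[P:T_m]$ both $(d,n)$-bounded, each factor $T_j/T_{j-1}$ being an abelian $w$-subgroup in $P/T_{j-1}$ and hence in $G/T_{j-1}$; being characteristic in $P$, each $T_j$ is normal in $G$. The factors $T_2/T_1,\dots,T_m/T_{m-1}$ will sit at the bottom of the series we are building, and $P/T_m$, which has order at most $k$, will be the single exceptional factor.

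It then remains to produce a normal series of $G$ running from $P$ to $w(G)$ whose factors are abelian $w$-subgroups. Since $P$ is characteristic, this amounts, after passing to $\bar G=G/P$, to the following: if $H$ is a soluble group of derived length $e$, then $w(H)$ admits a normal series $1=V_0\le V_1\le\cdots\le V_t=w(H)$ in $H$, with $t$ bounded in terms of $e$ and $w$ and each $V_j/V_{j-1}$ an abelian $w$-subgroup in $H/V_{j-1}$ (apply this to $H=\bar G$, which is soluble of derived length at most $d+n$, noting $w(\bar G)=w(G)/P$). I would prove this claim by induction on $e$: when $w$ has weight $1$ the derived series of $H$ works, and when $H$ is abelian and $w$ has weight at least $2$ we have $w(H)=1$; in general, writing $w=[w_1,w_2]$ when $w$ has weight at least $2$, one peels off abelian normal sections (of $H$, or of $w(H)$) one at a time exactly as in Proposition~\ref{perfect case}, at each stage choosing the next normal subgroup so as to make the factor visibly a $w$-subgroup — the point being that a commutator $[x,y]$ is automatically a $w$-value once $x$ lies in a $w_1$-subgroup and $y$ is a (conjugate of a) $w_2$-value, which is how one substitutes for the perfectness hypothesis used in Lemma~\ref{lemma2}. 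Splicing together $1=T_1\le\cdots\le T_m\le P\le U_1\le\cdots\le U_t=w(G)$, where $U_j$ is the preimage of $V_j$ in $G$, yields the desired series, with $s$ equal to the total number of terms (which is $(d,n,w)$-bounded, since $m$ is $(d,n)$-bounded and $t$ is $(d,n,w)$-bounded) and $h=k$ (which is $(d,n)$-bounded); the sole exceptional factor is $P/T_m$.

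The main obstacle is this soluble-group claim, and more precisely the fact that being a $w$-subgroup is not inherited by subgroups or by quotients: one cannot simply intersect or project a previously constructed series, so the normal subgroups appearing in it must be selected very deliberately — typically as commutator subgroups such as $[A,w_2(H)]$ for suitable abelian normal $A$, or as terms of the lower central series of $w(H)$ intersected back into $H$ — and one must check that all of these are indeed $w$-subgroups and that the process terminates after a number of steps bounded solely in terms of $e$ and $w$. Establishing this uniformly is the technical core of the proof; the analogous series constructed in \cite{brakrashu} provide the pattern to imitate.
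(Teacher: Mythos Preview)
Your overall strategy is exactly the paper's: pass to the perfect term $P=G^{(j)}$ via Lemma~\ref{lemma4}, apply Proposition~\ref{perfect case} to $P$ to build the bottom of the series (with the single finite exceptional factor $P/T_m$), and then handle the soluble quotient $G/P$ separately before splicing. The only substantive difference is in that last step: where you propose to prove from scratch, by an induction on the derived length mimicking the argument of Proposition~\ref{perfect case}, that a soluble group $H$ admits a normal series from $1$ to $w(H)$ whose factors are abelian $w$-subgroups with length bounded in terms of $w$ and the derived length of $H$, the paper simply invokes this as an existing result --- it is precisely \cite[Theorem~B]{fernandez-morigi}. So what you have flagged as ``the main obstacle'' and ``the technical core'' is already available in the literature and need not be redone; your sketch of how to carry it out is in the right spirit but, as you note, the details are genuinely delicate (the $w$-subgroup property is not preserved under passage to sub- or quotient groups), and the Fern\'andez-Alcober--Morigi argument is the clean way through.
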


\begin{proof}
By Lemma \ref{lemma4} there exists an integer $j\leq d+n$ such that $G^{(j)}=G^{(j+1)}$. Since $G^{(j)}$ is perfect,  we have $G^{(j)}=w(G^{(j)})$, and  so $G^{(j)} \leq w(G)$.   By Proposition \ref{perfect case} $G^{(j)}$ has  characteristic subgroups $$1=S_{1}\leq S_{2}\leq \cdots \leq S_{m} $$ such that each quotient $S_{i}/S_{i-1}$ is an abelian $w$-subgroup in $G^{(j)}/S_{i-1}$ while   the index $[G^{(j)}:S_{m}]$ and $m$ are $(d,n)$-bounded. 

The  quotient $\overline{G}=G/G^{(j)}$ is  soluble  with derived length at most $j$. It follows  from \cite[Theorem B]{fernandez-morigi}  that $\overline{G}$ has a normal series 
$$\overline{1}=\overline{P_{1}}\leq\overline{P_{2}} \leq \cdots\leq\overline{P_{t}}=w(\overline{G})$$ such that   every quotient $\overline{P_{i}}/\overline{P}_{i-1}$  is an abelian  $w$-subgroup  in $\overline{G}/\overline{P}_{i-1}$ and $t$ is $(d,n,w)$-bounded. Put $T_{1}=S_{1},\ldots,T_{m}=S_{m}, T_{m+1}=G^{(j)}, T_{m+2}=P_{2}, \ldots, T_{m+t}=P_{t}$. By the construction the index $[T_{m+1}:T_{m}]$ is $(d,n)$-bounded. Moreover every quotient  $T_{i}/T_{i-1}$  with $i\neq m+1$ is an abelian $w$-subgroup  in $G/T_{i-1}$, and $m+t$ is $(d,n,w)$-bounded. Therefore  $1=T_{1}\leq T_{2}\leq\cdots \leq T_{m+t}=w(G)$ is the  series with the desired properties.  This completes the proof.  
\end{proof}

We conclude this section with the following  corollary  of Proposition \ref{general case}.

\begin{corollary}
\label{finitexpo}
Let $G$ be a soluble-by-finite group, $w$ a multilinear commutator word, and $m\geq 1$ an integer. Suppose that $G$ has only  finitely many $w^m$-values. Then $w(G)$ has finite exponent. If $G$ has only $t$ values of $w^{m}$ and  the soluble radical of $G$ has index $n$ and derived length $d$, then  the exponent of $w(G)$ is $(d,m,n,t,w)$-bounded.
\end{corollary}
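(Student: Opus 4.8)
The plan is to read the exponent bound off the normal series furnished by Proposition~\ref{general case}; given that proposition, this is essentially a bookkeeping argument. Since $G$ is soluble-by-finite, its soluble radical $R$ is a normal soluble subgroup of finite index; put $n=[G:R]$ and let $d$ be the derived length of $R$. Applying Proposition~\ref{general case} to $G$ we obtain a series of normal subgroups
\[
1=T_{1}\leq T_{2}\leq\cdots\leq T_{s}=w(G),
\]
where $s$ is $(d,n,w)$-bounded, such that every quotient $T_{i}/T_{i-1}$ is an abelian $w$-subgroup in $G/T_{i-1}$, with the possible exception of a single quotient whose order is at most a $(d,n)$-bounded number $h$. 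Because the $T_{i}$ are normal in $G$, for $x\in w(G)$ one pushes $x$ down the series one step at a time, raising it successively to the exponents of the quotients; hence the exponent of $w(G)$ divides the product of the exponents of the successive quotients, and it suffices to bound the exponent of each $T_{i}/T_{i-1}$.

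The main step is to bound the exponent of a quotient $A=T_{i}/T_{i-1}$ that is an abelian $w$-subgroup in $\overline{G}:=G/T_{i-1}$. By the definition of a $w$-subgroup, $A$ is generated by cyclic subgroups each of which is \emph{entirely contained} in $\overline{G}_{w}$; fix one such cyclic subgroup $\langle\overline{x}\rangle$. Then every power $\overline{x}^{j}$ is a $w$-value in $\overline{G}$, so $(\overline{x}^{j})^{m}=\overline{x}^{jm}$ is a $w^{m}$-value in $\overline{G}$ for every integer $j$. Now the set of $w^{m}$-values of $\overline{G}$ is the image of $G_{w^{m}}$ under the natural projection, so it has at most $t:=|G_{w^{m}}|$ elements. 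Consequently the cyclic group $\langle\overline{x}^{m}\rangle$, which coincides with the set $\{\overline{x}^{jm}\}$ of all powers of $\overline{x}^{m}$, has order at most $t$; in particular $\overline{x}$ has finite order $r$ (otherwise $\langle\overline{x}^{m}\rangle$ would be infinite), and from $r/\gcd(r,m)\leq t$ we get $r\leq tm$. Thus $A$ is abelian and generated by elements of order at most $tm$, so its exponent divides $\operatorname{lcm}\{1,2,\dots,tm\}$, an $(m,t)$-bounded number.

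Putting the pieces together, each quotient $T_{i}/T_{i-1}$ of the series either is an abelian $w$-subgroup in $G/T_{i-1}$, and then has $(m,t)$-bounded exponent, or is the exceptional quotient, and then has order, hence exponent, at most $h$. Since there are $s-1$ quotients and $s$ is $(d,n,w)$-bounded, the exponent of $w(G)$ is $(d,m,n,t,w)$-bounded, which is the quantitative assertion; the first statement then follows by taking for $t,n,d$ the quantity $|G_{w^{m}}|$ and the index and derived length of the soluble radical of $G$. I do not expect a genuine obstacle here: the one point that must not be overlooked is that a $w$-subgroup is generated by cyclic subgroups \emph{all} of whose elements are $w$-values, as it is precisely this that keeps the powers $\overline{x}^{jm}$ inside the finite set $\overline{G}_{w^{m}}$ and thereby forces each generator of $A$ to have bounded order.
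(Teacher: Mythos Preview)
Your proof is correct and follows the same approach as the paper: apply Proposition~\ref{general case} to obtain the series, observe that each abelian $w$-subgroup factor has $(m,t)$-bounded exponent because its generators are elements all of whose powers are $w$-values (so their $m$th powers lie in the finite set of $w^m$-values), and multiply the exponents of the factors. The paper's proof is terser on the middle step, but the argument you spell out---that $|\langle\overline{x}^{m}\rangle|\leq t$ forces the order of $\overline{x}$ to be at most $tm$---is exactly what is intended.
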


\begin{proof}
It follows from Theorem \ref{general case}  that $G$ has a  series, 
$$ 1=T_1\leq T_2\leq\cdots \leq T_s=w(G)$$
such that  $s$ is a $(d,n,w)$-bounded number and every quotient $T_i/T_{i-1}$ is an abelian $w$-subgroup  in $G/T_{i-1}$, except possibly one quotient whose order is $(d,n)$-bounded. By the hypothesis $G$ has only $t$  values of $w^m$.  Therefore every quotient $T_i/T_{i-1}$ must have finite exponent bounded  in terms of $m$ and $t$, except  possibly one finite quotient whose order is $(d,n)$-bounded. Thus we  conclude that $w(G)$ has finite exponent bounded in terms of $d,m,n,t$ and $w$, as claimed. 
\end{proof}


\section{Proof of the main results}
The aim of this section is to prove  Theorem \ref{T1} and Theorem \ref{T2}. For the reader's convenience we recall some useful facts.

\begin{lemma}
\label{lemma5}
Let $w$ be  any word and $G$ a group such that  $G_w$ is finite.  Then  $w(G)$ is finite if and only if  it is periodic. Moreover if $G$ has precisely $m$ $w$-values and  $w(G)$ has exponent  $e$, then  the order of $w(G)$ is $(e,m)$-bounded. 
\end{lemma}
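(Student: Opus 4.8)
The plan is to use the fact that a finite set of $w$-values is invariant under conjugation, which forces the centre of $w(G)$ to have finite index, and then to apply Schur's theorem together with the structure of finitely generated abelian groups.

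First I would note that $G_w$ is closed under conjugation in $G$, since $w(g_1,\dots,g_n)^x=w(g_1^x,\dots,g_n^x)$. Consequently, for every $x\in G_w$ the conjugacy class $x^G$ is contained in $G_w$, so $|x^G|\le m$; that is, $C_G(x)$ has index at most $m$ in $G$. Letting $G$ act by conjugation on the $m$-element set $G_w$, we obtain a homomorphism from $G$ to the symmetric group $\mathrm{Sym}(G_w)$ whose kernel is $C:=\bigcap_{x\in G_w}C_G(x)=C_G(w(G))$; hence $[G:C]\le m!$. Since $C\cap w(G)=Z(w(G))$, this yields $[w(G):Z(w(G))]\le m!$.

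Next, by Schur's theorem \cite[10.1.4]{rob} the derived subgroup $w(G)'$ is finite, and in fact of $m$-bounded order, because the bound in Schur's theorem depends only on the index of the centre. The quotient $w(G)/w(G)'$ is abelian and generated by the images of the finitely many elements of $G_w$, hence is a finitely generated abelian group. If $w(G)$ is periodic, then $w(G)/w(G)'$ is a finitely generated periodic abelian group, therefore finite, and so $w(G)$ itself is finite; the reverse implication is trivial. If, moreover, $w(G)$ has exponent $e$, then $w(G)/w(G)'$ is generated by $m$ elements and has exponent dividing $e$, so $|w(G)/w(G)'|\le e^{m}$, whence $|w(G)|\le e^{m}\,|w(G)'|$ is $(e,m)$-bounded.

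The argument is almost entirely routine; the only point requiring any thought is the deduction that $Z(w(G))$ has finite index from the mere finiteness of $G_w$, after which Schur's theorem and the elementary theory of finitely generated abelian groups finish the job.
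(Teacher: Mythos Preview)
Your proof is correct. Both your argument and the paper's begin identically: the set $G_w$ is conjugation-invariant, so $C=C_G(G_w)=C_G(w(G))$ has index at most $m!$, and $C\cap w(G)=Z(w(G))$ has index at most $m!$ in $w(G)$. From this point the two arguments diverge. The paper works directly with $Z(w(G))$ as an abelian subgroup of $m$-bounded index in the $m$-generator group $w(G)$, invokes the Schreier bound to conclude that $Z(w(G))$ is generated by an $m$-bounded number of elements, and hence is finite of $(e,m)$-bounded order once the exponent is $e$. You instead apply Schur's theorem to bound $|w(G)'|$ in terms of $m$ alone, and then observe that the abelianization $w(G)/w(G)'$ is generated by the $m$ images of the $w$-values. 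Your route trades the Schreier index formula for Schur's theorem (which the paper uses elsewhere anyway); it has the minor advantage that the abelian quotient is visibly $m$-generated, yielding the cleaner explicit bound $|w(G)|\le e^{m}\cdot|w(G)'|$. Both approaches are of comparable depth and arrive at the same conclusion.
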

\begin{proof}
Assume that $G_w$ is a finite set of $m$ elements. Let $C=C_G(G_w)$. Since the set $G_w$ is normal, it follows that the index of $C$ in $G$ is at most $m!$. Since $w(G)$ is generated by at most $m$ $w$-values,  $C\cap w(G)$ is generated by an $m$-bounded number of elements. Moreover  $C\cap w(G)$ is an abelian subgroup with $m$-bounded index in $w(G)$. Therefore $w(G)$ is finite if and only if $C\cap w(G)$ is periodic. It is clear that  if $C\cap w(G)$ has exponent $e$, then $w(G)$ has $(e,m)$-bounded order. 
\end{proof}

 We need   the following result that can be found in \cite{Shu}. The proof  is based on Lie methods in the spirit of Zelmanov's solution of the Restricted Burnside Problem. 
\begin{theorem}
\label{teoshum}
Let $q$ be  a prime-power and $w$ a multilinear commutator word. Assume that $G$ is a residually finite group  such that any $w$-value in $G$ has order dividing $q$. Then the verbal subgroup $w(G)$ is locally finite.
\end{theorem}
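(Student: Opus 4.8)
The first step is to replace $G$ by finite groups. Since $w(G)$ is locally finite exactly when each of its finitely generated subgroups is finite, and every such subgroup is contained in $H=\langle a_{1},\dots,a_{d}\rangle$ for suitable $a_{1},\dots,a_{d}\in G_{w}$, it is enough to prove that each such $H$ is finite. Now $H$ is residually finite, and by residual finiteness of $G$, for every normal subgroup $N$ of finite index in $G$ the quotient $H/(N\cap H)$ is isomorphic to the subgroup of the finite group $G/N$ generated by $d$ of its $w$-values; moreover $G/N$ again has all of its $w$-values of order dividing $q$. Hence the theorem reduces to the following assertion about finite groups: \emph{there is an integer $N_{0}=N_{0}(q,w,d)$ such that whenever $\Gamma$ is a finite group in which every $w$-value has order dividing $q$ and $a_{1},\dots,a_{d}\in\Gamma_{w}$, then $|\langle a_{1},\dots,a_{d}\rangle|\le N_{0}$.} Indeed, the subgroups $N\cap H$ are then normal in $H$ of index at most $N_{0}$, there are only finitely many of them because $H$ is finitely generated, and they intersect trivially; so $H$ embeds in a finite direct product of groups of order at most $N_{0}$ and is therefore finite.

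To prove the finite-group assertion, write $q=p^{e}$ and set $D=\langle a_{1},\dots,a_{d}\rangle$. One first shows, with the help of the classification of finite simple groups, that $\Gamma$ must be soluble: a non-abelian finite simple group $S$ is perfect, so $w(S)=S$, and a short induction based on the Ore conjecture (now a theorem: every element of a finite simple group is a commutator) shows that in fact every element of $S$ is a $w$-value; were all of these of order dividing $q$, then $\exp S$ would divide $q$ and $S$ would be a $p$-group, which is absurd. Granted that $\Gamma$ is soluble, a bookkeeping argument on its derived and lower central series --- using the standard properties of the values of multilinear commutator words together with the fact that $D$ is generated by $p$-elements --- reduces matters to the case where $D$ is a finite $p$-group whose $d$ generators, and the relevant iterated commutators in them, are $w$-values and hence have order dividing $q$. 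Now form the associated graded Lie ring $L=L(D)$ over the field with $p$ elements, relative to the Jennings--Zassenhaus series. Each generator of $L$, and more generally every Lie-commutator in the generators, is the leading term of a group element of order dividing $p^{e}$ and is therefore ad-nilpotent of $e$-bounded index, by Lazard's correspondence with the usual refinements for $e>1$. Provided $L$ satisfies a polynomial identity, Zelmanov's theorem --- the Lie-theoretic engine of his solution of the restricted Burnside problem, to the effect that a polynomial identity Lie algebra generated by finitely many elements all of whose generator-commutators are ad-nilpotent of bounded index is nilpotent of bounded class --- shows that $L$, and hence $D$, is nilpotent of $(q,w,d)$-bounded class. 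Being boundedly generated by elements of order dividing $q$, the nilpotent group $D$ then has $(q,w,d)$-bounded order, which gives the required $N_{0}$.

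The step I expect to be the main obstacle is supplying the polynomial identity for $L$. In the restricted Burnside problem that identity is available precisely because the whole group has finite exponent; here $D$ is not assumed to have bounded exponent --- that is essentially the conclusion --- so the identity must come from a different source, via a Wilson--Zelmanov-type argument to the effect that the Lie ring of a group possessing a suitable coset identity, or only boundedly many $w$-values all of bounded order, satisfies a polynomial identity. This is exactly the kind of adjustment to Zelmanov's techniques alluded to in \cite{eueu}. Secondary difficulties are the appeal to the classification in the reduction to soluble groups, and the care needed to keep all parameters bounded in the passage to the $p$-group $D$ --- in particular the closure properties of the sets of values of multilinear commutator words, which are more delicate than those of the lower central words $\gamma_{k}$.
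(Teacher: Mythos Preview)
The paper does not prove this theorem; it is quoted from \cite{Shu} with only the remark that ``the proof is based on Lie methods in the spirit of Zelmanov's solution of the Restricted Burnside Problem.'' So there is no in-paper argument to compare your proposal against.

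That said, your outline is a faithful sketch of the strategy actually used in \cite{Shu} and the companion paper \cite{eueu}, and you have correctly isolated the polynomial-identity step as the crux. A few remarks. Your reduction to soluble $\Gamma$ via the Ore conjecture is valid and rather slick --- the induction ``$S_{w_1}=S=S_{w_2}$, hence by Ore every element of $S$ is a $w$-value'' really does go through --- but it is anachronistic: \cite{Shu} predates the proof of Ore by a decade and instead shows directly, by Hall--Higman type arguments, that $w(\Gamma)$ is a $p$-group; your ``bookkeeping argument'' from soluble $\Gamma$ down to a $p$-group $D$ conceals essentially the same work and is not as routine as you make it sound. On the PI side you need slightly less than you suggest: the hypothesis makes $w(x_1,\dots,x_k)^q\equiv 1$ a genuine law in $\Gamma$, not merely a coset identity, and its linearisation already supplies a polynomial identity for the associated Lie algebra, so the full Wilson--Zelmanov machinery is not required. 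Finally, your parenthetical ``or only boundedly many $w$-values all of bounded order'' belongs to the hypotheses of the ambient Theorem~\ref{T1}, not to the present statement, and should be dropped. One genuine point of care you pass over: for general multilinear $w$ (unlike $\gamma_k$), an iterated group commutator in $w$-values need not itself be a $w$-value, so the ad-nilpotence of Lie commutators in the generators is not automatic from the hypothesis and requires an extra argument; this is precisely one of the ``adjustments'' handled in \cite{eueu}.
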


Now we are ready to prove Theorem \ref{T1}.
\begin{proof}[Proof of Theorem \ref{T1}]
Let $G$ be  a residually finite group in which the word $v=w^q$ has only finitely many values. In view of Lemma \ref{lemma5} it is sufficient to show that $v(G)$ is periodic. Choose a normal subgroup $K$ in $G$ such that the index $[G:K]$ is finite and $v(K)=1$. All $w$-values in $K$ have  order dividing $q$. By Theorem \ref{teoshum}  $w(K)$ is locally finite and so in particular it is periodic. We pass to the quotient group $G/w(K)$ and assume that $w(K)=1$. Then $K$ is soluble and so $G$ is soluble-by-finite.    Corollary \ref{finitexpo} now tells us  that  $w(G)$ has finite exponent. Since every $v$-value in $G$ is an element of $w(G)$, it follows that  $v(G)$ is periodic too. The proof is complete.
\end{proof}

Next  we  deal with the proof of Theorem \ref{T2}. The following result will be useful.

\begin{proposition}
\label{bounded}
Let $m,k\geq 1$ and $q$ be a $p$-power for some prime $p$. Assume that $G$ is a finite $m$-generator group such that $[x_{1},\ldots,x_{k}]^{q}=1$ for all $x_1,\dots,x_k\in G$. Then the exponent of $\gamma_{k}(G)$ is $(k,m,q)$-bounded.
\end{proposition}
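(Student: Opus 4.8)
Throughout, assume $k\ge 2$; the case $k=1$ is merely the statement that a group satisfying $x^{q}\equiv 1$ has exponent dividing $q$. The plan has three stages: a combinatorial reduction to bounding the nilpotency class of $\gamma_k(G)$; a reduction to the situation where $\gamma_k(G)$ is a finite $p$-group; and a Lie-theoretic estimate of that class via Zelmanov's methods. It is the last stage that carries the real difficulty.

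\emph{The combinatorial reduction.} Since $\gamma_k=[[x_1,x_2],x_3,\dots,x_k]$, for any element $c=[x_1,x_2]$ and any $g_3,\dots,g_{k+1}$ the left-normed commutator $[c,g_3,\dots,g_{k+1}]$ is again a $\gamma_k$-value; iterating, every iterated commutator of $\gamma_k$-values is itself a $\gamma_k$-value, and of course the set of $\gamma_k$-values is closed under conjugation. Writing $N=\gamma_k(G)$, it follows that each term $\gamma_i(N)$ of the lower central series of $N$ is generated by $\gamma_k$-values, so that each factor $\gamma_i(N)/\gamma_{i+1}(N)$, being abelian and generated by elements of order dividing $q$, has exponent dividing $q$. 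Hence if $N$ is nilpotent of class $c$, then $\exp(N)$ divides $q^{c}$, and it suffices to bound the nilpotency class of $N$ in terms of $k,m,q$. (Note that $N$ itself need not be boundedly generated — iterated wreath products show this — so one cannot hope to bound $|N|$, only $\exp(N)$.)

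\emph{Reduction to a $p$-group.} I would next argue that the hypothesis forces $N=\gamma_k(G)$ to be a $p$-group. By the previous paragraph the successive factors of the derived series of $N$ are $p$-groups, so the only possible obstruction is the soluble residual $N^{(\infty)}$, which is perfect and generated by $\gamma_k$-values — that is, by $p$-elements all of whose iterated commutators are again $p$-elements of order dividing $q$ — and a Baer--Suzuki-type argument applied to the images in a hypothetical non-abelian composition factor rules this out. Thus $N$ is a $p$-group; since $\gamma_k(G/O_{p'}(G))\cong\gamma_k(G)$ one may pass to a section and assume $O_{p'}(G)=1$, so that the Lie-theoretic machinery below applies cleanly (alternatively one simply works throughout with the restricted Lie algebra attached to a $p$-central series of the $m$-generated group $G$).

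\emph{Bounding the class via Lie methods, and the main obstacle.} Pass to the Lie ring $L$ associated with a suitable ($p$-central) filtration of $G$: it is generated by its first homogeneous component, hence by at most $m$ elements, and by the combinatorial reduction the ideal $\gamma_k(L)$ is spanned by images of $\gamma_k$-values. The relations $c^{q}=1$ on $\gamma_k$-values $c$ translate, through Lazard's identities relating $(\operatorname{ad}\bar c)^{p^{a}}$ to $\operatorname{ad}$ applied to a $p^{a}$-th power of $c$ modulo higher-degree terms, into an Engel-type identity holding on $\gamma_k(L)$. One then applies Zelmanov's theorem — that a finitely generated Lie algebra which is PI and satisfies a suitable Engel condition is nilpotent — in the quantitative, graded form developed in \cite{eueu} out of the techniques of \cite{korea,orko}, to conclude that $\gamma_k(L)$ is nilpotent of $(k,m,q)$-bounded class. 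Translating this back through the standard correspondence between $G$ and $L$, using inclusions such as $\gamma_i(\gamma_k(G))\le\gamma_{ik}(G)$ to locate the relevant terms, yields that $\gamma_k(G)$ is nilpotent of $(k,m,q)$-bounded class; combined with the first paragraph, $\exp(\gamma_k(G))$ divides a $(k,m,q)$-bounded power of $q$, as required. The genuinely hard point is this last stage: one must extract the correct Engel identity on the graded Lie ring from the comparatively weak hypothesis that the $\gamma_k$-\emph{values} — rather than all elements, as in the Burnside setting — have order dividing $q$, and then run Zelmanov's nilpotency argument in this graded, $p$-restricted situation; the combinatorial reduction and the passage between the group and its associated Lie ring are routine commutator calculus.
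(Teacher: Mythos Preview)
Your first two stages are on target: the combinatorial fact that iterated commutators of $\gamma_k$-values are again $\gamma_k$-values (so each lower-central factor of $N=\gamma_k(G)$ has exponent dividing $q$) is exactly the input the paper also uses implicitly, and that $N$ is a $p$-group is right --- the paper simply cites \cite[Lemma~5]{novar} for this rather than sketching a Baer--Suzuki argument. The gap is in the third stage. You want to run Zelmanov's machinery on the restricted Lie algebra attached to a $p$-central filtration of the $m$-generated group $G$, but $G$ is in general not a $p$-group, and passing to $G/O_{p'}(G)$ does not repair this: take $G=S_3$, $k=2$, $q=3$; here $O_{3'}(G)=1$, yet $D_2(G)=G^{3}[G,G]=G$, so the Jennings--Zassenhaus series is constant and the associated $\mathbb{F}_3$-Lie algebra is zero --- it sees nothing of $\gamma_2(G)$. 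The lower-central Lie ring fares no better, since $G$ need not be nilpotent and the series stabilises above~$1$. Working instead with the Lie ring of $N$ itself runs into the obstacle you yourself flag: $N$ is not boundedly generated, so Zelmanov's finite-generation hypothesis is unavailable there. Your sketch supplies no boundedly-generated Lie algebra over $\mathbb{F}_p$ that faithfully carries $\gamma_k(G)$, and without one the argument does not close.

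The paper avoids bounding the class of $N$ altogether, via an idea you are missing: \emph{bounded verbal width}. Once $N$ is known to be a $p$-group, $G$ is metanilpotent, and by \cite[Theorem~3.2]{austr} (or Segal \cite{Se2}) there is an $s=s(m)$ such that every element $x\in\gamma_k(G)$ is a product of at most $s$ $\gamma_k$-values $x_1,\dots,x_s$. Now $H=\langle x_1,\dots,x_s\rangle$ is an $s$-generated finite $p$-group in which --- by precisely your combinatorial observation --- every iterated commutator of the generators is a $\gamma_k$-value in $G$, hence of order dividing $q$; this is exactly the setting in which Zelmanov's Lie methods bound $|H|$ in terms of $k,q,s$. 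So the order of each individual $x$ is bounded, giving the exponent bound on $\gamma_k(G)$ directly. The width step is what manufactures the boundedly-generated $p$-group on which the Lie-theoretic machinery can legitimately be run.
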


\begin{proof}
 In the case where $k=2$ this is Lemma 3.3 of \cite{upi}. The general case is actually rather similar to the case $k=2$. By \cite[Lemma 5]{novar} $\gamma_k(G)$ is a $p$-group. Therefore $G$ is metanilpotent. It follows from \cite[Theorem 3.2]{austr} that there exists a number $s$, depending only on $m$, such that every element of $\gamma_k(G)$ is a product of at most $s$ $\gamma_k$-values. Of course this also follows from Segal's theorem \cite{Se2}. Let $x$ be an arbitrary element in $\gamma_k(G)$ and write $x=x_1\dots x_s$, where $x_i$ are $\gamma_k$-values. Let $H=\langle x_1,\dots,x_s\rangle$. By Zelmanov's theory the order of $H$ is $(k,q,s)$-bounded. In particular the order of the element $x$ is $(k,q,s)$-bounded. Since $x$ has been chosen in $\gamma_k(G)$ arbitrarily and since $s$ depends only on $m$, the result follows.
\end{proof}

\begin{proposition}
\label{finite case}
Let $k,m\geq 1$ and $q$ be a prime-power. Let $v=[x_{1},\ldots,x_{k}]^{q}$ and suppose that $G$ is a finite group having precisely $m$ $v$-values. Then  the order of $v(G)$ is $(k,m,q)$-bounded. 
\end{proposition}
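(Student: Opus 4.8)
Here is how I would approach the proof of Proposition~\ref{finite case}.

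The plan begins with two reductions. First, by Lemma \ref{lemma5} it suffices to bound $\exp(v(G))$ in terms of $k,m,q$. Observe moreover that $v(G)=\langle G_v\rangle$ is generated by at most $m$ elements and that $C_G(G_v)$ has index at most $m!$ in $G$; hence $C_G(G_v)\cap v(G)$ lies in the centre of $v(G)$ and has index at most $m!$ there, so by Schur's theorem (in its quantitative form) $v(G)'$ is finite of $m$-bounded order and $v(G)$ has $m$-bounded derived length. It will therefore be enough to bound the exponent of the abelian group $v(G)/v(G)'$, that is, to bound the orders modulo $v(G)'$ of the finitely many $v$-values.

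Second, I would show that $G$ has a soluble normal subgroup $R$ of $(k,m,q)$-bounded index, so that $G$ becomes soluble-by-finite with bounded parameters. The key point is that every nonabelian simple section $S$ of $G$ satisfies $v(S)=S$: indeed $v(S)$ is normal in $S$, and if $v(S)=1$ then every $\gamma_k$-value of $S$ has order dividing $q$, so $\gamma_k(S)=S$ is a $p$-group by \cite[Lemma 5]{novar}, which is impossible. Consequently $S$ acts faithfully by conjugation on its set of $v$-values (the kernel of this action centralises $v(S)=S$), and since every $\gamma_k$-value of $S$ lifts to one of $G$, that set has at most $m$ elements, whence $|S|\le m!$. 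Applying this to the socle $S_1\times\cdots\times S_r$ of $G/R$, where $R$ is the soluble radical: each $S_i$ contributes a nontrivial $v$-value, so the socle has at least $2^r$ $v$-values and thus $r\le\log_2 m$; since $G/R$ embeds in $\mathrm{Aut}(S_1\times\cdots\times S_r)$ and each $|S_i|\le m!$, the index $[G:R]$ is $(k,m,q)$-bounded.

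The crux is the remaining step: bounding $\exp(v(G)/v(G)')$. The obstacle is that the derived length of $R$ is \emph{not} $(k,m,q)$-bounded — already for $k=1$ the hypothesis permits $R$ to be an arbitrary finite soluble group of exponent dividing $q$ — so Corollary \ref{finitexpo} cannot be invoked for $G$ itself. Instead I would apply Corollary \ref{finitexpo} to the group $v(G)$: it is soluble of $m$-bounded derived length by the first paragraph, it coincides with its own soluble radical, and it has at most $m$ $v$-values, so $\exp(\gamma_k(v(G)))$ is $(k,m,q)$-bounded; when $k=1$ this already yields the conclusion since $\gamma_1(v(G))=v(G)$. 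For $k\ge 2$ one is left with the nilpotent quotient $v(G)/\gamma_k(v(G))$ of class less than $k$, generated by $m$ elements, and here I would bound the orders of the images of the $v$-values $g^{q}$ by using that $\gamma_k$ induces a multilinear map modulo $\gamma_{k+1}$: the resulting congruences $\gamma_k(x_1,\dots,x_k)^{j}\equiv\gamma_k(x_1^{j},x_2,\dots,x_k)\pmod{\gamma_{k+1}}$ show that in a suitable finite quotient every power of such a $v$-value is again a $v$-value, forcing these orders to be at most $m$. The step I expect to be genuinely delicate is precisely this last one: transporting the hypothesis ``$G$ has only $m$ values of $v$'' down to $v(G)/v(G)'$ uniformly, with no a priori bound on nilpotency classes or on the length of the lower central series — most plausibly by appealing to the quantitative Lie-algebra methods of Zelmanov that already underpin Theorem \ref{teoshum}.
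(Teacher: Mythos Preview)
Your opening reduction is correct and pleasant: since $C_G(G_v)\cap v(G)\le Z(v(G))$ has index at most $m!$, Schur's theorem gives $|v(G)'|$ an $m$-bound, and it would then suffice to bound $\exp(v(G)/v(G)')$. But the proposal does not close this last gap. Applying Corollary~\ref{finitexpo} to $v(G)$ only bounds $\exp(\gamma_k(v(G)))$, which for $k\ge 2$ already sits inside $v(G)'$ and so gives nothing new. The multilinearity congruence $[x_1,\dots,x_k]^j\equiv[x_1^{\,j},x_2,\dots,x_k]\pmod{\gamma_{k+1}(G)}$ does show that each $v$-value has order at most $m$ modulo $\gamma_{k+1}(G)$, but this only controls $v(G)$ modulo $v(G)\cap\gamma_{k+1}(G)$, which in general strictly contains $v(G)'$; it says nothing about $v(G)/v(G)'$. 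Your final appeal to Zelmanov's methods is not a proof: those methods require an identical relation (every $\gamma_k$-value of order dividing $q$) as input, and you have not produced such a law for $v(G)$ or for any bounded-index subgroup. Your second reduction, bounding $[G:R]$ for the soluble radical $R$, is correct but, as you yourself note, irrelevant because the derived length of $R$ is unbounded.

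The paper supplies exactly the missing identity, via a different reduction. After first replacing $G$ by a subgroup generated by at most $mk$ elements (so that $v(G)$ is unchanged), it sets $H=C_G(G_v)$; then $[G:H]\le m!$ and $H$ has $(k,m)$-boundedly many generators. The key observation is that every $v$-value of $H$ lies in $G_v$ and is therefore centralised by $H$, so $H/Z(H)$ satisfies the \emph{law} $[x_1,\dots,x_k]^q=1$. Now Proposition~\ref{bounded} applies directly to $H/Z(H)$ and bounds $\exp(\gamma_k(H/Z(H)))$; Mann's theorem then bounds $\exp(\gamma_k(H)')$. Modulo this bounded-exponent normal subgroup, $\gamma_k(H)$ is abelian, so $H$ is soluble of derived length at most $k+1$ and $G$ is soluble-by-finite with $(k,m,q)$-bounded parameters. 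Corollary~\ref{finitexpo} applied to $G$ (not to $v(G)$) then bounds $\exp(\gamma_k(G))\ge\exp(v(G))$, and Lemma~\ref{lemma5} finishes. The decisive idea you are missing is the passage to $H/Z(H)$, which converts ``$G_v$ is finite'' into a genuine law and thereby makes the Zelmanov-based Proposition~\ref{bounded} available.
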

\begin{proof}
Without loss of generality we can assume that $G$ is generated by at most $mk$ elements. Indeed, let $\{u_{1},\ldots,u_{m}\}$ be the set of all $v$-values in $G$. Write $u_{i}=[g_{i1},\ldots,g_{ik}]^{q}$ for $i=1,\ldots,m$. Let $K$ be the subgroup of $G$ generated by $g_{11},g_{12},\ldots,g_{mk}$ and note that $v(G)=v(K)$. Hence, without loss of generality we can assume that $G$ is generated by $g_{11},g_{12},\ldots,g_{mk}$.

Let $H$ be the centralizer in $G$ of all $v$-values. Then $[G:H]\leq m!$. Since $G$ is  generated by at most  $mk$ elements and $H$ has $m$-bounded index in $G$, it follows that also $H$ has a $(k,m)$-bounded number of generators. Consider the quotient $\overline{H}=H/Z(H)$. Since $H$ centralizes all  $v$-values, it follows that the law $v\equiv 1$ holds in  $\overline{H}$. Therefore we are in the position to use Proposition \ref{bounded} and deduce that the exponent of $\gamma_{k}(\overline{H})$ is $(k,m,q)$-bounded. Thus $\gamma_{k}(H)/Z(\gamma_{k}(H))$ has $(k,m,q)$-bounded exponent. Mann's  result \cite{mann} tells us that  $[\gamma_{k}(H),\gamma_{k}(H)]$  has $(k,m,q)$-bounded exponent. We pass  to the quotient $G/[\gamma_{k}(H),\gamma_{k}(H)]$ and  assume that $\gamma_{k}(H)$ is abelian. It follows that $H$ is soluble with derived length at most $k+1$. By  Corollary \ref{finitexpo} we conclude that $v(G)$ has  $(k,m,q)$-bounded exponent. Lemma \ref{lemma5} now tells us  that $v(G)$ has $(k,m,q)$-bounded order, as desired.  
\end{proof}

The proof of Theorem \ref{T2} is now immediate.
\begin{proof}[Proof of Theorem \ref{T2}] Let $v=w^{q}$ and choose a residually finite group $G$ in which the word $v$ has at most $m$ values. According to Proposition \ref{finite case} there exists a $(k,m,q)$-bounded number $\nu$ such that the order of $v(K)$ is at most $\nu$ for every finite quotient $K$ of $G$. Hence, every subgroup of finite index of $G$ intersects $v(G)$ by a subgroup of index at most $\nu$ in $v(G)$. Since $G$ is residually finite, we conclude that the order of $v(G)$ is at most $\nu$. The proof is complete.  
\end{proof}



\begin{thebibliography}{99}
\bibitem{brakrashu} S. Brazil, A. Krasilnikov, P. Shumyatsky, Groups with bounded verbal conjugacy classes, \textit{J. Group Theory} {\bf 9} (2006), 127--137.
\bibitem{fernandez-morigi} 
G.\,A. Fern\'andez-Alcober, M. Morigi, Outer commutator words are uniformly concise. \textit{J. London Math. Soc.} \textbf{82} (2010), 581--595.
\bibitem{ivanov} S. V. Ivanov, P. Hall's conjecture on the finiteness of verbal subgroups, \textit{Izv. Vyssh. Ucheb. Zaved.} {\bf 325} (1989), 60--70.
\bibitem{mann}A. Mann, The exponents of central factor and commutator groups, {\it J. Group Theory} {\bf 10} (2007),  435--436.
\bibitem{merzlyakov}Ju.\,I. Merzlyakov, Verbal and marginal subgroups of linear groups, \textit{Dokl. Akad. Nauk SSSR} {\bf 177} (1967), 1008--1011.
\bibitem{ols} A.\,Yu. Ol'shanskii, \textit{Geometry of Defining Relations in Groups}, Mathematics and its applications  {\bf70} (Soviet Series), Kluwer Academic Publishers, Dordrecht, 1991.
\bibitem{rob}D.\,J.\,S. Robinson, \textit{A Course in the Theory of Groups}, Springer, 1996, 2nd edn.
\bibitem{Se} D. Segal, \textit{Words: notes on verbal width in groups}, LMS Lecture Notes \textbf{361}, Cambridge Univ. Press, Cambridge, 2009.
\bibitem{Se2} D. Segal, Closed subgroups of profinite groups, {\it Proc.  London Math.
Soc.} (3) {\bf 81} (2000), 29--54.
\bibitem{upi} P. Shumyatsky, Groups with commutators of bounded order, {\it Proc. Amer. Math. Soc.} {\bf 127}
(1999), 2583--2586.
\bibitem{Shu} P. Shumyatsky, Verbal subgroups in residually finite groups, {\it Quart. J. Math.} {\bf 51} (2000), 523--528.
\bibitem{eueu} P. Shumyatsky,
Applications of Lie ring methods to group theory, in {\it Nonassociative algebra and its applications}, eds. R. Costa, A. Grishkov, H. Guzzo Jr.
and L.\,A. Peresi,
Lecture Notes in Pure and Appl. Math., Vol. 211 (Dekker, New York, 2000),  373--395.

\bibitem{novar}  P. Shumyatsky, A (locally nilpotent)-by-nilpotent variety of groups,  \textit{Math. Proc.  Camb. Phil. Soc.}, {\bf132} (2002), 193-196. 
\bibitem{austr} P. Shumyatsky, On profinite groups in which commutators are Engel, {\it J. Aust. Math. Soc.} {\bf 70} (2001), 1-9. 
\bibitem{TS2} R.\,F. Turner-Smith, Finiteness conditions for verbal subgroups, \textit{J. London Math. Soc.} \textbf{41} (1966), 166--176.
\bibitem{jwilson} J. Wilson, On outer-commutator words, \textit{Can. J. Math.} {\bf 26} (1974), 608--620.
\bibitem{korea} E. Zelmanov, {\it Nil Rings and Periodic Groups}, The Korean Math. Soc., Lecture Notes in Math., Seoul, 1992.
\bibitem{orko} E. Zelmanov, Lie ring methods in the  theory of nilpotent groups,  in {\it Proc. Groups'93/St. Andrews}, (eds: C. M. Campbell et al.), LMS Lecture Note  {\bf 212},  Cambridge Univ. Press, 1995, 567--585. 
\end{thebibliography}
\end{document}